\renewcommand{\H}{\mathbb{H}}
\newcommand{\R}{\mathbb{R}}
\renewcommand{\S}{\mathbb{S}}
\newcommand{\cj}[1]{\overline{#1}}
\newcommand{\ci}{\mathrm{i}}
\newcommand{\qi}{\mathbf{i}}
\newcommand{\qj}{\mathbf{j}}
\newcommand{\qk}{\mathbf{k}}
\newcommand{\IP}[2]{\langle #1, #2 \rangle}
\newcommand{\CP}[2]{#1 \times #2}
\newcommand{\NC}{\mathcal{N}}
\newcommand{\IC}{\mathcal{S}}
\newcommand{\Quadrance}[2]{q(#1,#2)}
\newcommand{\FourBar}[4]{\mathcal{F}(#1,#2;#3,#4)}
\newtheorem{thm}{Theorem}
\newtheorem{cor}{Corollary}
\theoremstyle{definition}
\newtheorem{defn}{Definition}
\theoremstyle{remark}
\newtheorem{rmk}{Remark}
\newtheorem{example}{Example}
\title[The Geometry of Quadratic Quaternion Polynomials \ldots]{The Geometry of Quadratic Quaternion Polynomials in Euclidean and Non-Euclidean Planes}
\date{\today}
\author{Zijia Li}
\address[Zijia Li]{Institute of Discrete Mathematics and Geometry, Vienna University of Technology, Wiedner Hauptstrasse 8-10/104, 1040 Vienna, Austria}
\email{zijia.li@tuwien.ac.at}
\author{Josef Schicho}
\address[Josef Schicho]{Research Institute for Symbolic Computation, Johannes Kepler University, Linz, Austria}
\email{josef.schicho@risc.jku.at}
\author{Hans-Peter Schröcker}
\address[Hans-Peter Schröcker]{Unit Geometry and CAD, University of Innsbruck, Technikerstr.~13, 6020 Innsbruck, Austria}
\email{hans-peter.schroecker@uibk.ac.at}
\keywords{quaternion, factorization, four-bar linkage, parallelogram, anti-parallelogram, conic, focal point, hyperbolic geometry}
\subjclass[2010]{
  12D05,   16S36,   51M09,   51M10,   70B10    }
\begin{document}

\begin{abstract}
    We propose a geometric explanation for the observation that generic quadratic
  polynomials over split quaternions may have up to six different factorizations
  while generic polynomials over Hamiltonian quaternions only have two. Split
  quaternion polynomials of degree two are related to the coupler motion of
  ``four-bar linkages'' with equal opposite sides in universal hyperbolic
  geometry. A factorization corresponds to a leg of the four-bar linkage and
  during the motion the legs intersect in points of a conic whose focal points
  are the fixed revolute joints. The number of factorizations is related by the
  number of real focal points which can, indeed, be six in universal hyperbolic
  geometry. \end{abstract}

\maketitle

\section{Introduction}

The aim of this paper is a geometric explanation for a purely algebraic
observation on factorizations of quaternion polynomials. While generic quadratic
polynomials over the (Hamiltonian) quaternions admit two different
factorizations into linear factors, there exist up to six such factorizations
for generic polynomials over split quaternions \cite{li18}. What is not too
surprising from a purely algebraic viewpoint has rather strange consequences in
the kinematics of the hyperbolic plane. Polynomials over Hamiltonian quaternions
parameterize rational spherical motions; factorizations correspond to the
decomposition into sequences of coupled rotations and give rise to mechanisms
one of whose links follows this motion. In particular, the two factorizations of
a quadratic polynomial correspond to the two legs of a spherical four-bar
linkage. Similar constructions are possible in the hyperbolic plane but then
give rise to ``four-bar linkages'' with up to \emph{six} legs -- even in generic
cases!

In this article, we briefly explain the algebraic background of quaternion
polynomial factorization and then provide a geometric interpretation for above
phenomena in universal hyperbolic geometry according to
\cite{wildberger10,wildberger11,wildberger13}. The four-bar linkages in question
have equal opposite sides and are related to certain conics whose focal points
are the fixed and the moving revolute joints, respectively. It is rather obvious
that a conic in universal hyperbolic geometry can have up to six real focal
points and this corresponds to the number of six possible factorizations. We
also find other relations between the geometry of this configuration and the
algebra of split quaternion polynomials. An observation which is geometrically
evident but not obvious from a purely algebraic viewpoint is that the six fixed
and the six moving joints are vertices of complete quadrilaterals.

Our geometric interpretation is inspired by \cite{schoger96} which treats
four-bar linkages with equal opposite sides in traditional hyperbolic geometry.
Similar properties of four-bar linkages with equal opposite sides in elliptic
or Euclidean geometry are known as well.

\section{Quaternions and Quaternion Polynomials}

The \emph{Hamiltonian quaternions $\H$} form a real associative algebra of
dimension four. An element $h \in \H$ is written as $h = h_0 + h_1\qi + h_2\qj +
h_3\qk$ with $h_0$, $h_1$, $h_2$, $h_3 \in \R$. The multiplication rules can be
derived from the relations
\begin{equation}
  \label{eq:1}
  \qi^2 = \qj^2 = \qk^2 = \qi\qj\qk = -1.
\end{equation}
Changing some signs in \eqref{eq:1} gives the multiplication rules for the
\emph{split quaternions~$\S$:}
\begin{equation}
  \label{eq:2}
  \qi^2 = -\qj^2 = -\qk^2 = -\qi\qj\qk = -1.
\end{equation}
In this text, we focus on split quaternions. It parallels, in large parts, the
Hamiltonian theory but there are also important differences which we will
emphasize whenever appropriate.

The conjugate quaternion is $\cj{h} \coloneqq h_0 - h_1\qi - h_2\qj - h_3\qk$,
the quaternion norm is $h\cj{h}$. Carrying out the computation, we find $h\cj{h}
= h_0^2 + h_1^2 - h_2^2 - h_3^2 \in \R$. The quaternion $h$ has a multiplicative
inverse if and only if $h\cj{h} \neq 0$. The inverse is then given as
\begin{equation}
  \label{eq:3}
  h^{-1} = (h\cj{h})^{-1}\cj{h}.
\end{equation}

The split quaternions $\S$ form a ring with center $\R$ while the Hamiltonian
quaternions even form a division ring because their norm $h\cj{h} = h_0^2 +
h_1^2 + h_2^2 + h_3^2$ is positive unless $h = 0$.

Denote by $\S[t]$ the ring of polynomials in the indeterminate $t$ with
coefficients in $\S$ where multiplication is defined by the convention that $t$
commutes with all coefficients. Given $C = \sum_{\ell=0}^n c_\ell t^\ell \in
\S[t]$, the \emph{conjugate polynomial} is defined as $\cj{C} \coloneqq
\sum_{\ell=0}^n \cj{c_\ell}t^\ell$. This implies that $C\cj{C}$ is real (and
non-negative in the Hamiltonian case).

We say that $C = \sum_{\ell=0}^n c_\ell t^\ell \in \S[t]$ admits a factorization
if there exist $h_1$, $h_2,\ldots, h_n \in \S$ such that
\begin{equation*}
  C = c_n(t-h_1)(t-h_2) \cdots (t-h_n).
\end{equation*}

In this text, we will confine ourselves to \emph{generic} cases only. This
allows us to assume that the leading coefficient $c_n$ is invertible. As far as
factorizability is concerned, it is then no loss of generality to assume $c_n =
1$ (monic polynomials). Other generic properties are:
\begin{itemize}
\item The coefficients of $C$ are linearly independent.
\item The linear polynomial $R = C - M$ has a unique zero for every real monic
  quadratic factor $M$ of $C\cj{C}$.
\end{itemize}

There is an algorithm for factorizing quaternion polynomials based on the
factorization of the norm polynomial $C\cj{C}$ over $\R$
\cite{hegedus13,li15,li15b,li18}. This algorithm will work for generic
polynomials but may fail in special circumstances. We are going to describe it
for generic monic polynomials of degree two. An extension to higher degrees is
possible but requires a suitable concepts of polynomial division which we do not
need here.

In order to compute a factorization of a generic quadratic polynomial $C$, pick
a monic quadratic divisor $M$ of $C\cj{C}$. The polynomial $R \coloneqq C - M$
is linear and we may write $R = r_1t + r_0$ with $r_0$, $r_1 \in \S$. By
assumption, the leading coefficient $r_1$ of $R$ is invertible whence we may set
$h_2 \coloneqq -r_1^{-1}r_0$. Theory ensures that $t-h_2$ is a right factor of
$C$, that is, there exists $h_1 \in \S$ such that $C = (t-h_1)(t-h_2)$, see
\cite{li18}. Comparing coefficients of $C = t^2 + c_1t + c_0$ and
$(t-h_1)(t-h_2) = t^2 - (h_1+h_2)t + h_1h_2$ we find $h_1 = -c_1 - h_2$.

Two observations are crucial to us:
\begin{itemize}
\item In non-generic cases, the leading coefficient of $r_1$ may fail to be
  invertible. In this case, neither existence nor uniqueness of $h_2$ can be
  guaranteed.
\item Unless $C\cj{C}$ is the square of a quadratic polynomial, it has at least
  one more monic quadratic factor $N$ which gives rise to at least one more
  factorization $C = (t-k_1)(t-k_2)$ with $h_1 \neq k_1$, $h_2 \neq k_2$.
\end{itemize}

\begin{defn}
  Two factorizations $C = (t-h_1)(t-h_2) = (t-k_1)(t-k_2)$ of a generic
  quadratic polynomial $C$ are called \emph{complementary} if $C\cj{C} =
  (t-h_1)(t-\cj{h_1})(t-k_1)(t-\cj{k_1})$.
\end{defn}

The defining condition for complementary factorization is equivalent to $C\cj{C}
= (t-h_2)(t-\cj{h_2})(t-k_2)(t-\cj{k_2})$. The meaning of this concept is that
the two factorizations are obtained from relatively prime quadratic factors $M$,
$N$ of $C\cj{C} = MN$. Factorizations in the Hamiltonian case are always
complementary.

Since we will need it later, we explicitly compute the complementary
factorization of $C = (t-h_1)(t-h_2)$. We set $M \coloneqq (t-h_1)(t-\cj{h_1})$
and obtain $C = M + R$ where $R = -(h_2-\cj{h_1})t + h_1(h_2-\cj{h_1})$. The
sought quaternion $k_2$ of the complementary factorization is the unique zero
of~$R$:
\begin{equation}
  \label{eq:4}
  k_2 = (\cj{h_2}-h_1)^{-1}h_1(\cj{h_2}-h_1).
\end{equation}
Comparing coefficients of
\begin{equation}
  \label{eq:5}
  (t-h_1)(t-h_2)=(t-k_1)(t-k_2),
\end{equation}
the value of $k_1$ can be computed as $k_1 = h_1 + h_2 - k_2$. We aim, however,
at a formula similar to \eqref{eq:4}. Taking conjugates on both sides of
\eqref{eq:5}, we arrive at $(t-\cj{h_2})(t-\cj{h_1}) =
(t-\cj{k_2})(t-\cj{k_1})$, whence $\cj{k_1} =
(h_1-\cj{h_2})^{-1}\cj{h_2}(h_1-\cj{h_2})$, or, after conjugating, $k_1 =
(\cj{h_1}-h_2) h_2 (\cj{h_1}-h_2)^{-1}$. Considering the relation \eqref{eq:2}
between inverse and conjugate quaternion, this finally gives
\begin{equation}
  \label{eq:6}
  k_1 = (\cj{h_2}-h_1)^{-1} h_2 (\cj{h_2}-h_1).
\end{equation}
Note the symmetry between \eqref{eq:4} and \eqref{eq:6}.

We conclude this section with two examples concerning the number of
factorizations in the Hamiltonian and in the split quaternion case. In the
Hamiltonian case, at most two factorizations do exist while more factorizations
are possible, even for \emph{generic} split quaternion polynomials.

\begin{example}
  The polynomial $C = t^2 - (2+\qj+2\qk)t + 1 + 2\qi + \qj + 2\qk \in \H[t]$
  admits the two factorizations
  \begin{equation*}
    C = (t - 1 - \qj)(t - 1 - 2\qk)
    = (t - 1 - \tfrac{8}{5}\qj - \tfrac{6}{5}\qk)(t -1 + \tfrac{3}{5}\qj - \tfrac{4}{5}\qk).
 \end{equation*}
  Other factorizations do not exist. This is a generic case and factorizations
  can be computed as described above. Note that $C\cj{C} = M_1M_2$ with $M_1 =
  t^2 - 2t + 2$ and $M_2 = t^2-2t + 5$. Other monic quadratic factors of
  $C\cj{C}$ do not exist.
\end{example}

\begin{example}
  The polynomial $C = t^2 - (2 + \qj + 2\qk)t + 1 - 2\qi + \qj + 2\qk \in \S[t]$
  over the split quaternions admits precisely six different factorizations (two
  of which are formally identical to the factorizations of the previous example):
  \begin{equation*}
    \begin{aligned}
      C &= (t - 1 - \qj)(t - 1 - 2\qk) \\
        &= (t - 1 - \tfrac{8}{5}\qj - \tfrac{6}{5}\qk)(t - 1 + \tfrac{3}{5}\qj - \tfrac{4}{5}\qk) \\
        &= (t + \tfrac{1}{2} + \tfrac{3}{2}\qi + \tfrac{1}{2}\qj - \tfrac{3}{2}\qk )(t -\tfrac{5}{2} - \tfrac{3}{2}\qi - \tfrac{3}{2}\qj - \tfrac{1}{2}\qk) \\
        &= (t - \tfrac{5}{2} - \tfrac{3}{2}\qi + \tfrac{1}{2}\qj - \tfrac{3}{2}\qk)(t + \tfrac{1}{2} + \tfrac{3}{2}\qi - \tfrac{3}{2}\qj - \tfrac{1}{2}\qk) \\
        &= (t - \tfrac{1}{2} - \tfrac{1}{2}\qi - \tfrac{3}{2}\qj - \tfrac{1}{2}\qk)(t - \tfrac{3}{2} + \tfrac{1}{2}\qi + \tfrac{1}{2}\qj - \tfrac{3}{2}\qk) \\
        &= (t - \tfrac{3}{2} + \tfrac{1}{2}\qi - \tfrac{3}{2}\qj - \tfrac{1}{2}\qk)(t - \tfrac{1}{2} - \tfrac{1}{2}\qi + \tfrac{1}{2}\qj - \tfrac{3}{2}\qk).
    \end{aligned}
  \end{equation*}
  The number of six factorizations is explained by the real factorization
  $C\cj{C} = t(t+1)(t-2)(t-3)$ of the norm polynomial. There exist six real
  quadratic factors of $C\cj{C}$, each of them giving rise to a different
  factorization of~$C$.
\end{example}

\section{Universal Hyperbolic Geometry}

We are going to provide a concise introduction to hyperbolic geometry via split
quaternions. The vector part of a quaternion $h$ is $\tfrac{1}{2}(h - \cj{h})$.
Quaternions that equal their vector part are called \emph{vectorial.} In the
vector space $V$ of vectorial split quaternions we define \emph{inner product}
and \emph{cross product} according to $\IP{a}{b} \coloneqq \frac{1}{2}(a\cj{b} +
b\cj{a})$ and $\CP{a}{b} \coloneqq \frac{1}{2}(ab - ba)$. Denote by $P^2$ the
projective plane over $V$. The quadratic form $x \mapsto \IP{x}{x}$ defines a
regular conic $\NC \subset P^2$ with real points. It is the absolute circle of a
hyperbolic geometry. For our purposes it will be advantageous to adopt the
viewpoint of \emph{universal hyperbolic geometry} in the sense of
\cite{wildberger10,wildberger11,wildberger13}. Hence, we also refer to $\NC$ as
\emph{null circle.} The points of $\NC$ are called \emph{null points,} the
tangents of $\NC$ are called \emph{null lines.}

We represent the straight line spanned by two points $[a]$ and $[b]$ by
$[\CP{a}{b}]$. A line $[u]$ and a point $[x]$ are incident if and only if
$\IP{u}{x} = 0$. The \emph{quadrance} between two non-null points $[a]$ and
$[b]$ is defined as
\begin{equation*}
  \Quadrance{[a]}{[b]} \coloneqq 1 - \frac{\IP{a}{b}^2}{\IP{a}{a}\IP{b}{b}}.
\end{equation*}
Quadrances correspond to squared distances in traditional hyperbolic geometry.
Because of the absence of transcendential functions in their definition, they
better fit into our algebraic framework. Points on a common null line are
characterized by zero quadrance. The reflection in a non-null line $z$ is the
unique homology that fixes that line and the absolute conic $\NC$. It also fixes
the pole $Z$ of $z$ with respect to $\NC$ and hence is referred to as reflection
in $Z$ as well \cite{wildberger10}.

Using ordinary quaternions instead of split quaternions results in a similar
algebraic description of planar elliptic geometry. Here, the null conic has
index zero whence null points and null lines only exist in the complex extension
of the real projective plane.

\section{The Kinematics of Polynomial Factorization}

Factorization of quaternion polynomials is closely related to planar kinematics.
A quaternion $h \in \S$ acts on a point $[x] \in P^2$ via $[x] \mapsto
[hx\cj{h}]$. Now the straightforward computation
\begin{equation*}
  2\IP{hx\cj{h}}{hy\cj{h}} =
  hx\cj{h}\cj{(hy\cj{h})} + hy\cj{h}\cj{(h{x}\cj{h})} =
  h\cj{h}\;h(x\cj{y} + y\cj{x})\cj{h} =
  (h\cj{h})^2\IP{x}{y}
\end{equation*}
shows that
\begin{equation*}
  \Quadrance{[hx_1\cj{h}]}{[hx_2\cj{h}]} =
  \Quadrance{[x_1]}{[x_2]}
\end{equation*}
holds for any two points $[x_1]$, $[x_2] \in P^2$. Hence, the action of $h$ is
an isometry of the underlying metric geometry. It has the real fix point
$[h-\cj{h}]$ and two further fix points -- the null points on the absolute polar
of $[h-\cj{h}]$. They are always complex in the Hamiltonian case (elliptic
geometry) but may be real in the split quaternion case (hyperbolic geometry). We
call any map of above type a \emph{rotation} and the point $[h-\cj{h}]$ its
\emph{center.} Sometimes we will simply speak of ``the rotation~$h$''.

The center $[h-\cj{h}]$ of $t-h$ is independent of $t$. Hence, linear
polynomials parameterize rotations with a fixed centers and the factorization of
a polynomial $C$ corresponds to the decomposition of the motion parameterized by
$C$ into a sequence of coupled rotations. This gives rise to a mechanical
interpretation of the factorizations $C = (t-h_1)(t-h_2) = (t-k_1)(t-k_2)$. For
the time being, we assume that these factorizations are complementary. (The case
of non-complementary factorizations will be clarified later, see
Corollaries~\ref{cor:1} and~\ref{cor:2}.)

Consider the four-bar linkage $\FourBar{H_1}{K_1}{K_2}{H_2}$ with fixed revolute
joints at $H_1 \coloneqq [h_1-\cj{h_1}]$, $K_1 \coloneqq [k_1-\cj{k_1}]$ and
corresponding moving revolute joints at $H_2 \coloneqq [h_2-\cj{h_2}]$, $K_2
\coloneqq [k_2-\cj{k_2}]$. The polynomial $C$ describes a motion that can be
performed by a system rigidly connected to the moving point pair $(H_2,K_2)$ in
the four-bar linkage. Comparing \eqref{eq:2} and \eqref{eq:3} we see that this
four-bar linkage is rather special: The pair $(k_1,k_2)$ is obtained from
$(h_2,h_1)$ by left and right multiplying with a fixed quaternion. This amounts
to a mere change of coordinates and shows that the kinematics and geometry of
these two pairs are completely identical. In particular, we obtain

\begin{thm}
  \label{th:1}
  Opposite sides in a four-bar linkage $\FourBar{H_1}{K_1}{K_2}{H_2}$ obtained
  from two complementary factorizations $C = (t-h_1)(t-h_2) =
  (t-k_1)(t-k_2)$have equal quadrances: $\Quadrance{H_1}{H_2} =
  \Quadrance{K_1}{K_2}$ and $\Quadrance{H_1}{K_1} = \Quadrance{H_2}{K_2}$.
\end{thm}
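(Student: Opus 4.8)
The plan is to extract from \eqref{eq:4} and \eqref{eq:6} a single isometry relating the two factorizations, and to read both equalities off it. Put $g \coloneqq \cj{h_2} - h_1$, so that these formulas become $k_2 = g^{-1} h_1 g$ and $k_1 = g^{-1} h_2 g$: the pair $(k_1,k_2)$ is the $g$-conjugate of $(h_2,h_1)$. First I would invoke \eqref{eq:3} to write $g^{-1} = (g\cj{g})^{-1}\cj{g}$ with $g\cj{g}\in\R$; then the projective map $\sigma\colon [x]\mapsto[g^{-1}xg] = [\cj{g}\,x\,g]$ has the form $[x]\mapsto[hx\cj{h}]$ with $h = \cj{g}$, and is therefore a quadrance-preserving isometry by the computation carried out just before the theorem. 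Next I would verify that $\sigma$ sends centers to centers: using $\cj{g}g = g\cj{g}\in\R$ one finds $k_i - \cj{k_i} = (g\cj{g})^{-1}\cj{g}\,(h_{3-i}-\cj{h_{3-i}})\,g$, so that $K_1 = \sigma(H_2)$ and $K_2 = \sigma(H_1)$.

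The first equality is then immediate, since $\sigma$ interchanges the two legs: $\Quadrance{H_1}{H_2} = \Quadrance{\sigma(H_1)}{\sigma(H_2)} = \Quadrance{K_2}{K_1} = \Quadrance{K_1}{K_2}$.

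For the second equality I would first rewrite it, via $K_1 = \sigma(H_2)$ and $K_2 = \sigma(H_1)$, as $\Quadrance{H_1}{\sigma(H_2)} = \Quadrance{H_2}{\sigma(H_1)}$. Writing $a \coloneqq h_1 - \cj{h_1}$ and $b \coloneqq h_2 - \cj{h_2}$ and choosing the representatives $\cj{g}bg$, $\cj{g}ag$ for $K_1$, $K_2$, isometry invariance gives $\IP{\cj{g}bg}{\cj{g}bg} = N^2\IP{b}{b}$ and $\IP{\cj{g}ag}{\cj{g}ag} = N^2\IP{a}{a}$ with $N = g\cj{g}$, so the two quadrance denominators agree. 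Hence the claim collapses to the single scalar identity $\IP{a}{\cj{g}bg} = \IP{b}{\cj{g}ag}$, i.e.\ to self-adjointness of $\sigma$ at the pair $(a,b)$.

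The hard part is precisely this identity, and it is where I expect the real obstacle to lie: $\sigma$ is in general neither an involution nor globally self-adjoint -- either would force $g^2\in\R$, which fails as soon as $h_1$ and $h_2$ have different real parts -- so the first-equality mechanism does not transfer and no ambient symmetry is available for free. The route I would take is threefold. A direct expansion of the bilinear form (using $\cj{g}g\in\R$) gives the adjoint relation $\IP{\cj{g}ag}{b} = \IP{a}{g b\cj{g}}$. Next, decomposing $g = s - w$ into its scalar part $s\in\R$ and vector part $w = \tfrac12(a+b)$, a short computation yields $g b\cj{g} - \cj{g} b g = -2s\,\CP{a}{b}$; the decisive feature is that $w$ lies in the span of $a$ and $b$, which forces the entire discrepancy into the cross-product direction. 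Finally $\IP{a}{\CP{a}{b}} = 0$. Chaining the three facts, $\IP{a}{\cj{g}bg} = \IP{a}{g b\cj{g}} = \IP{\cj{g}ag}{b} = \IP{b}{\cj{g}ag}$, which is the required identity and finishes the proof. I anticipate that the middle step -- identifying the commutator as a multiple of $\CP{a}{b}$ exactly because the vector part of $g$ equals $\tfrac12(a+b)$ -- is the hinge on which the whole argument turns.
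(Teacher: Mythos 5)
Your proof is correct, and it is instructive to compare it with what the paper actually does. The paper gives no formal proof: Theorem~\ref{th:1} is presented as an immediate consequence of the remark preceding it, namely that by \eqref{eq:4} and \eqref{eq:6} the pair $(k_1,k_2)$ is obtained from $(h_2,h_1)$ by conjugation with the fixed quaternion $g = \cj{h_2}-h_1$, so that the two legs are related by ``a mere change of coordinates''. That remark is precisely your first paragraph, and it does prove the first equality $\Quadrance{H_1}{H_2} = \Quadrance{K_1}{K_2}$. But you are right to single out the second equality as the real issue: since $\sigma$ carries $(H_1,H_2)$ to $(K_2,K_1)$ but does not carry the pair $(H_1,K_1)$ to $(H_2,K_2)$ unless it is an involution (i.e.\ unless $g^2\in\R$), the claim $\Quadrance{H_1}{K_1} = \Quadrance{H_2}{K_2}$ reduces to the reciprocity $\Quadrance{H_1}{\sigma(H_2)} = \Quadrance{H_2}{\sigma(H_1)}$, which is \emph{not} a formal consequence of isometry invariance and which the paper silently passes over. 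Your computation closes this gap correctly: I checked all three ingredients (the adjoint identity $\IP{\cj{g}ag}{b} = \IP{a}{gb\cj{g}}$, which follows from cyclic invariance of the scalar part; the commutator identity $gb\cj{g} - \cj{g}bg = -2s\,\CP{a}{b}$, which uses exactly that the vector part of $g$ is $-\tfrac{1}{2}(a+b)$; and $\IP{a}{\CP{a}{b}} = 0$), as well as the reduction of the denominators via $\IP{\cj{g}xg}{\cj{g}xg} = (g\cj{g})^2\IP{x}{x}$. It may be worth recording the synthetic meaning of your hinge observation: the center of the rotation $\sigma\colon[x]\mapsto[\cj{g}xg]$ is $[\cj{g}-g] = [a+b]$, which lies on the line $H_1H_2$; conjugating a rotation by the reflection in a line through its center inverts it, and since that reflection fixes $H_1$ and $H_2$ one gets $\Quadrance{H_1}{\sigma(H_2)} = \Quadrance{H_1}{\sigma^{-1}(H_2)} = \Quadrance{\sigma(H_1)}{H_2}$, which is the geometric content of your scalar identity. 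In short, your proposal is best described not as a different route but as a rigorous completion of the paper's one-line argument: the same starting isometry, plus the genuinely necessary extra step that the paper omits.
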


Four-bar linkages of the type described in Theorem~\ref{th:1} posses two folded
configurations where all revolute joints are collinear and two ``motion modes''
(corresponding to irreducible components of the motion in the projectivized
space of quaternions). In Euclidean geometry, they are called ``parallelogram''
and ``anti-parallelogram mode'', respectively -- a terminology that does not
make sense in elliptic or hyperbolic geometry and is not necessary either.
(Although one may attempt to recover some aspects of the Euclidean situation in
traditional hyperbolic geometry as done in \cite{schoger96}.) The polynomial $C$
only describes one motion component of $\FourBar{H_1}{K_1}{K_2}{H_2}$. In
Euclidean or spherical geometry, this is well-known; in hyperbolic geometry this
is not difficult to show.

\section{A Conic and its Focal Points}

In this section we investigate more closely the geometry of the four-bar linkage
$\FourBar{H_1}{K_1}{K_2}{H_2}$ obtained from two complementary factorizations $C
= (t-h_1)(t-h_2) = (t-k_1)(t-k_2)$. Denote by $H_2(t)$ and $K_2(t)$ the position
of the moving joints at parameter value $t$ and by $S(t)$ the intersection point
of the lines spanned by $H_1$, $H_2(t)$ and $K_1$, $K_2(t)$, respectively
whenever this point is well defined. This is not the case in folded positions
but then $S(t)$ can be defined by continuity.

Recall the following definition:

\begin{defn}
  \label{def:focal-point}
  The focal points of a conic in hyperbolic geometry are the real intersection
  points of its null tangents.
\end{defn}

Depending on the number of real null tangents, a regular conic may have between
two and six focal points.

\begin{thm}
  \label{th:2}
  The locus of all points $S(t)$ for varying parameter $t$ is a conic
  $\mathcal{S}$ with focal points $H_1$ and~$K_1$. If $\varrho(t)$ denotes the
  reflection in the conic tangent at $S(t)$, then $K_2(t) = \varrho(H_1)$ and
  $H_2(t) = \varrho(K_1)$.
\end{thm}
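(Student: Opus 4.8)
The plan is to realise the theorem as the kinematic incarnation of the classical focal/reflection property of conics, the ``conic'' being the envelope of the symmetry axes of the antiparallelogram-type linkage. First I would record the algebraic engine behind the configuration. By \eqref{eq:4} and \eqref{eq:6}, $k_2 = g^{-1}h_1 g$ and $k_1 = g^{-1}h_2 g$ with $g \coloneqq \cj{h_2}-h_1$, so conjugation by $g$ is an isometry carrying $H_1 \mapsto K_2$ and $H_2 \mapsto K_1$ in the reference position. The moving joints are $H_2(t) = [(t-h_1)(h_2-\cj{h_2})\cj{(t-h_1)}]$ and $K_2(t) = [(t-k_1)(k_2-\cj{k_2})\cj{(t-k_1)}]$, and each centre is fixed by its own rotation, e.g.\ $H_1 = [\cj{(t-h_1)}(h_1-\cj{h_1})(t-h_1)]$. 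Substituting these relations, I obtain, with $w(t) \coloneqq (t-h_1)\,g\,\cj{(t-k_1)}$, the two identities \[ K_2(t) = [\,\cj{w(t)}\,(h_1-\cj{h_1})\,w(t)\,], \qquad H_2(t) = [\,w(t)\,(k_1-\cj{k_1})\,\cj{w(t)}\,]. \] Hence the rotation $\alpha(t)\colon [x]\mapsto[\cj{w(t)}\,x\,w(t)]$ sends $H_1\mapsto K_2(t)$ and $H_2(t)\mapsto K_1$, and in particular maps the leg line $[\CP{H_1}{H_2(t)}]$ onto the leg line $[\CP{K_1}{K_2(t)}]$.

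Next I would produce the reflection $\varrho(t)$. Since $\alpha(t)$ is orientation preserving it is not itself a reflection (the quaternion $w(t)$ is not vectorial in general), but composing it with the reflection in the leg $[\CP{K_1}{K_2(t)}]$ yields an orientation-reversing isometry fixing that leg setwise and still sending $H_1\mapsto K_2(t)$, $H_2(t)\mapsto K_1$. Equivalently, $\varrho(t)$ is the reflection whose axis $z(t)$ is the common perpendicular bisector of the pairs $(H_1,K_2(t))$ and $(K_1,H_2(t))$. The hard part is precisely to show that these two perpendicular bisectors coincide, so that a single reflection realises both swaps $H_1\leftrightarrow K_2(t)$ and $K_1\leftrightarrow H_2(t)$; this is the symmetry of the equal-opposite-side configuration, and I expect to reduce it to a quaternionic identity powered by \eqref{eq:4}, \eqref{eq:6} together with the equal quadrances $\Quadrance{H_1}{H_2}=\Quadrance{K_1}{K_2}$ and $\Quadrance{H_1}{K_1}=\Quadrance{H_2}{K_2}$ of Theorem~\ref{th:1} (these equalities are exactly the necessary conditions for such a swapping reflection to exist).

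Granting the reflection, the remaining assertions are formal. Because $\varrho(t)$ interchanges $H_1\leftrightarrow K_2(t)$ and $K_1\leftrightarrow H_2(t)$, it interchanges the two leg lines and therefore fixes their intersection $S(t)$; a reflection fixes a generic point only if it lies on the axis, so $S(t)\in z(t)$. As $t$ varies, $z(t)$ depends rationally and quadratically on $t$, so the family of lines $z(t)$ envelopes a conic; since $S(t)$ lies on $z(t)$ for every $t$, it is the point of contact and the locus of the points $S(t)$ is exactly this conic $\mathcal{S}$, with tangent $z(t)$ the axis of $\varrho(t)$ at $S(t)$. This already gives $K_2(t)=\varrho(t)(H_1)$ and $H_2(t)=\varrho(t)(K_1)$.

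Finally I would identify the focal points. Reflecting the fixed point $H_1$ in the tangent $z(t)$ returns $K_2(t)$, and $\Quadrance{K_1}{K_2(t)}=\Quadrance{H_1}{H_2(t)}$ is independent of $t$ (it is the quadrance of the rigid leg, the two legs being equal by Theorem~\ref{th:1}); thus the reflections of $H_1$ in all tangents of $\mathcal{S}$ lie on a circle centred at $K_1$. This is the universal-hyperbolic analogue of the Euclidean director-circle property and characterises $H_1$ and $K_1$ as focal points of $\mathcal{S}$ in the sense of Definition~\ref{def:focal-point}, the symmetry between the $h$'s and the $k$'s ensuring both foci appear on an equal footing. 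An alternative, more computational route avoids the reflection: one parametrises $S(t)=[\CP{(\CP{H_1}{H_2(t)})}{(\CP{K_1}{K_2(t)})}]$ directly, verifies that it satisfies a quadratic equation, and locates the foci as the real intersection points of the null tangents of the resulting conic; the reflection argument above is, however, conceptually cleaner and explains the second statement of the theorem at once.
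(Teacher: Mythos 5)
Your opening algebra is sound: with $g \coloneqq \cj{h_2}-h_1$ and $w(t) \coloneqq (t-h_1)\,g\,\cj{(t-k_1)}$ one indeed has $k_2-\cj{k_2} = (g\cj{g})^{-1}\cj{g}(h_1-\cj{h_1})g$ and $h_2-\cj{h_2} = (g\cj{g})^{-1}g(k_1-\cj{k_1})\cj{g}$, and together with the fixed-point identity $\cj{(t-h_1)}(h_1-\cj{h_1})(t-h_1) = (t-h_1)\cj{(t-h_1)}\,(h_1-\cj{h_1})$ these give your two displayed formulas for $K_2(t)$ and $H_2(t)$. But the proof stops exactly where the theorem begins. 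The existence of a \emph{single} reflection $\varrho(t)$ swapping $H_1 \leftrightarrow K_2(t)$ and $K_1 \leftrightarrow H_2(t)$ --- which you yourself call the hard part --- is not proven, only predicted to follow from \eqref{eq:4}, \eqref{eq:6} and the quadrance equalities of Theorem~\ref{th:1}. It cannot follow from those equalities alone: both motion modes of a four-bar linkage with equal opposite quadrances satisfy them, yet only the anti-parallelogram-type mode admits the swapping reflection; in the parallelogram-type mode the two perpendicular bisectors you want to identify are genuinely distinct lines (in the Euclidean limit that mode exhibits a central similarity, not a reflection, and the paper's final section stresses that the two configurations cannot be distinguished algebraically and that ``in general nothing special can be said'' about them). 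So the reflection must be extracted from the factorization data itself --- presumably from your $w(t)$, which you correctly note is not vectorial --- and that computation is absent. A further complication you pass over: in universal hyperbolic geometry a pair of points has \emph{two} midpoints and hence two perpendicular bisectors, so ``the common perpendicular bisector'' of $(H_1,K_2(t))$ and $(K_1,H_2(t))$ involves choices whose consistency and rational dependence on $t$ would also have to be established.

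Two later steps are also invalid or unsupported as written. First, from $S(t) \in z(t)$ for all $t$ you conclude that $S(t)$ is the contact point of $z(t)$ with the envelope of the axes; this inference is false in general --- a curve may cross every line of a family transversally (intersect the tangents of a circle with a fixed line: each intersection point lies on a tangent, but the locus is the fixed line, not the circle). Second, you identify $H_1$, $K_1$ as focal points via a director-circle property, but Definition~\ref{def:focal-point} defines focal points as the real intersection points of the \emph{null tangents} of $\IC$; the equivalence of these two characterisations in universal hyperbolic geometry is a nontrivial claim you neither prove nor can cite --- note that reflections in null lines are not even defined, so the circle of reflected points does not directly see the null tangents at all. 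The paper avoids all three issues by brute force: it normalises coordinates (two cases, $H_1$ interior or exterior to $\NC$), computes $k_1$, $k_2$ explicitly from \eqref{eq:4} and \eqref{eq:6}, forms $\sigma(t) = (h_1 \times \eta_2(t)) \times (k_1 \times \varkappa_2(t))$, factors out a real polynomial to exhibit a parametrisation quadratic in $t$ (hence a conic), solves a quartic norm polynomial for the parameter values of the null tangents, and checks that these tangents pair off through $K_1$ and $H_1$; the reflection statement is likewise verified computationally. Your closing ``alternative computational route'' is in fact the paper's actual proof; to complete your argument you would either have to carry it out, or supply genuine proofs of the three missing steps above.
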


\begin{proof}
  Since we only consider generic cases in this paper, we can assume that one
  fixed revolute joint, say $H_1$, is contained in the interior or in the
  exterior of $\NC$. Our proof will consist of a straightforward computation
  based on a case distinction between these two cases.

  Assume at first that $H_1$ is contained in the interior of $\NC$. There exists
  a suitable isometry of hyperbolic geometry that maps $H_1$ to $[1]$. Moreover,
  we pick an initial position of the four-bar linkage where $H_2$ lies on the
  line spanned by $[\qi]$ and $[\qj]$ and apply a suitable linear
  re-parametrization that ensures $h_2 + \cj{h_2} = 0$. This allows us to write
  $C = (t-h_1)(t-h_2)$ where
  \begin{equation*}
    h_1 = h_{10} + h_{11}\qi
    \quad\text{and}\quad
    h_2 = h_{21}\qi + h_{22}\qj.
  \end{equation*}
  Using Equations~\eqref{eq:4} and \eqref{eq:6} we can compute the complementary
  factorization:
  \begin{gather*}
    \begin{aligned}
      k_1 &= N\bigl((h_{10}^2h_{21}+h_{11}^2h_{21}+2h_{11}h_{21}^2-2h_{11}h_{22}^2+h_{21}^3-h_{21}h_{22}^2)\qi
      \\ &\qquad +h_{22}(h_{10}^2-h_{11} ^2+h_{21}^2-h_{22}^2)\qj + 2h_{10}h_{11}h_{22}\qk \bigr),\\
      k_2 &=
      N \bigl(
      h_{11}(h_{10}^2+h_{11}^2+2h_{11}h_{21}+h_{21}^2+h_{22}^2)\qi \\ &\qquad +2h_{11}h_{22}(h_{11}+h_{21})\qj -2h_{10}h_{11}h_{22}\qk 
      \bigr)
      + h_{10}.
    \end{aligned}
  \end{gather*}
  with $N = (h_{10}^2+h_{11}^2+2h_{11}h_{21}+h_{21}^2-h_{22}^2)^{-1}$.

  The fixed revolute joints are $H_1 \coloneqq [h_1 - \cj{h_1}]$ and $K_1
  \coloneqq [k_1 - \cj{k_1}]$; the paths of the moving joints are parameterized
  by $H_2(t) = [\eta_2(t)]$, $K_2(t) = [\varkappa_2(t)]$ with
  \begin{equation*}
    \eta_2(t) \coloneqq (t-h_1)(h_2-\cj{h_2})(t-\cj{h_1}),\quad
    \varkappa_2(t) \coloneqq (t-k_1)(k_2-\cj{k_2})(t-\cj{k_1}).
  \end{equation*}
  This gives $S(t) = [\sigma(t)]$ where $\sigma(t) = (h_1 \times \eta_2(t))
  \times (k_1 \times \varkappa_2(t)) = FG$ and
  \begin{gather*}
    F = 2h_{11}^2h_{22}^2(h_{10}t^2 -(h_{10}^2 + h_{11}^2 - h_{21}^2 + h_{22}^2)t -h_{10}(h_{21}^2 - h_{22}^2)
    ),\\
    G =
    -((h_{11}+h_{21})\qi+h_{22}\qj)t^2
    +(2h_{10}h_{21}\qi+2h_{10}h_{22}\qj+2h_{11}h_{22}\qk)t\\
    + (h_{11}(h_{22}^2-h_{21}^2) -h_{21}(h_{10}^2+h_{11}^2))\qi
    + h_{22}(h_{11}^2-h_{10}^2)\qj
    - 2h_{10} h_{11}h_{22}\qk).
  \end{gather*}
  The homogeneous parametric equation of $S(t)$ is quadratic whence its locus
  is, indeed, a conic $\IC$. We still have to show that $H_1$ and $K_1$ are each
  incident with two null tangents of $\IC$. Their parameter values are solutions
  of the quartic polynomial $(G \times \frac{\mathrm{d}}{\mathrm{d}t}G)\cj{(G
    \times \frac{\mathrm{d}}{\mathrm{d}t}G)}$. The solution set equals
  \begin{equation*}
    \{\pm(h_{22}^2-h_{21}^2)^{1/2}, h_{10} \pm \ci h_{11} \}
  \end{equation*}
  and it can readily be verified that the first two tangents intersect in $K_1$
  while the last two intersect in $H_1$. The statement on the reflection admits
  a straightforward computational proof as well. In traditional hyperbolic
  geometry it is actually well-known \cite{schoger96}.

  If $H_1$ is contained in the exterior of $\NC$ we may assume
  \begin{equation*}
    h_1 = h_{10} + h_{12}\qj,\quad
    h_2 = h_{21}\qi + h_{22}\qj.
  \end{equation*}
  A similar computation then yields the parameter values
  \begin{equation}
    \label{eq:7}
    \{\pm(h_{22}^2-h_{21}^2)^{1/2}, h_{10}\pm h_{12} \}
  \end{equation}
  for the null tangents. Again, $K_1$ is incident with the first pair of
  null tangents and $H_1$ with the last pair.
  \hfill\qed
\end{proof}

An immediate consequence of Theorem~\ref{th:2} is the following characterization
of the geometry of fixed and moving revolute joints in four-bar linkages coming
from factorizations of a generic quadratic polynomial~$C$.

\begin{cor}
  \label{cor:1}
  If a generic quadratic motion polynomial $C$ admits six factorizations $C =
  (t-a_\ell)(t-b_\ell)$, $\ell \in \{1,2,3,4,5,6\}$, the six points $A_{\ell}
  \coloneqq [a_\ell-\cj{a_\ell}]$ are vertices of a complete quadrilateral whose
  sides are null. Opposite vertices correspond to complementary factorizations.
  Similar statements hold true for the six points $B_{\ell} \coloneqq
  [b_\ell-\cj{b_\ell}]$.
\end{cor}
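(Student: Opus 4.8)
The plan is to exhibit four null lines whose six pairwise intersections are exactly the points $A_\ell$, with opposite vertices matching complementary factorizations. First I would record the combinatorial skeleton. Six factorizations of a generic $C$ force the norm polynomial to split into four distinct real linear factors, $C\cj{C} = \prod_{i=1}^4 (t-\alpha_i)$, so that the monic quadratic divisors of $C\cj{C}$ are the six products $M_{ij} = (t-\alpha_i)(t-\alpha_j)$, $1 \le i < j \le 4$. The factorization attached to $M_{ij}$ produces the fixed joint $A_{ij}$, and two factorizations are complementary precisely when the associated index pairs $\{i,j\}$, $\{k,l\}$ are disjoint. This is already the incidence pattern of a complete quadrilateral: if a prospective side is labelled $n_i$ and $A_{ij} = n_i \cap n_j$, then the three pairs of opposite vertices $\{A_{ij},A_{kl}\}$ are exactly the complementary pairs.

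The geometric heart is to identify the sides $n_i$ and to show they do not depend on the chosen factorization. For a real root $\alpha_i$ one has $C(\alpha_i)\cj{C(\alpha_i)} = (C\cj{C})(\alpha_i) = 0$, so $q \coloneqq C(\alpha_i)$ is a nonzero null quaternion, i.e.\ a zero divisor. I would use this to prove that the map $[x] \mapsto [q x \cj{q}]$ is constant: under the standard identification $\S \cong M_2(\R)$ (norm $=$ determinant) the matrix $q$ has rank one, $\cj{q} = \operatorname{adj}(q)$, and $q x \cj{q}$ is, for every $x$, a scalar multiple of one fixed rank-one matrix; hence $[q x \cj{q}] = P_i$ is a single null point of $\NC$ depending only on $C$ and $\alpha_i$. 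Consequently, evaluating the trajectories of Theorem~\ref{th:2} at $t = \alpha_i$ collapses every moving joint onto this common point: $H_2(\alpha_i) = K_2(\alpha_i) = P_i$ for each factorization.

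Now I would fix any complementary pair and form the conic $\IC$ of Theorem~\ref{th:2}; its null tangents occur precisely at the four parameters $\alpha_1,\dots,\alpha_4$ of \eqref{eq:7}, which are all real in the six-factorization case, so that every fixed joint lies exterior to $\NC$. At $t = \alpha_i$ both lines defining $S(t)$ pass through $P_i$, whence $S(\alpha_i) = P_i \in \NC$; the null tangent of $\IC$ there must then be the unique null line through the null point $P_i$, namely the tangent $n_i$ to $\NC$ at $P_i$. This exhibits the four null tangents of every such conic as the same four lines $n_1,\dots,n_4$, depending only on $C$. Feeding this back into Theorem~\ref{th:2}, for the factorization coming from $M_{ij}$ the fixed joint $A_{ij}$ is the intersection of the two null tangents at the roots of $M_{ij}$, that is $A_{ij} = n_i \cap n_j$. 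Ranging over all six pairs $\{i,j\}$ shows the points $A_\ell$ are exactly the six vertices of the complete quadrilateral with null sides $n_1,\dots,n_4$, with opposite vertices corresponding to complementary factorizations.

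The statement for the moving centers $B_\ell = [b_\ell-\cj{b_\ell}]$ follows by the symmetry that interchanges the fixed and the moving frame: passing to the reversed motion, whose norm polynomial again has four distinct real roots and under which the centers of the left factors are exactly the points $B_\ell$. The same argument then produces four null lines $m_1,\dots,m_4$ --- the tangents to $\NC$ at the null points determined by the right action $x \mapsto \cj{C(\alpha_i)}\,x\,C(\alpha_i)$ --- whose six intersections are the $B_\ell$. The main obstacle is the collapse statement of the second paragraph: making precise, and factorization-independent, that at each root $\alpha_i$ the rank-one quaternion $C(\alpha_i)$ sends every point to one fixed null point $P_i$, so that the null tangent at $\alpha_i$ is intrinsically the $\NC$-tangent at $P_i$. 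Once that is established, the remainder is the combinatorics of the complete quadrilateral together with Theorem~\ref{th:2}.
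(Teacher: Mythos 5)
Your overall skeleton (four null lines, six pairwise intersections, complementarity as disjointness of index pairs, inverse motion for the $B_\ell$) is the same as the paper's proof, which deduces from Theorem~\ref{th:2} and Definition~\ref{def:focal-point} that the $A_\ell$ are the intersections of the common tangents of $\NC$ and $\IC$; your attempt to make the four tangents \emph{intrinsic} (independent of the chosen complementary pair) via the null points $P_i$ is a genuinely needed ingredient that the paper glosses over. But the step you yourself call the geometric heart is false. It is true that $q = C(\alpha_i)$ is a zero divisor and that $[x]\mapsto[qx\cj{q}]$ is constant \emph{where it is defined}; the trouble is that the moving joints are exactly where it is not. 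Write $C=(t-h_1)(t-h_2)$, $M_1=(t-h_1)(t-\cj{h_1})$, $M_2=(t-h_2)(t-\cj{h_2})$. The trajectory of $H_2$ is $[\eta_2(t)]$ with $\eta_2(t)=(t-h_1)(h_2-\cj{h_2})(t-\cj{h_1})$, and $C(t)(h_2-\cj{h_2})\cj{C(t)}=M_2(t)\,\eta_2(t)$. At a root $\alpha$ of $M_2$ the left-hand side is zero, so the constancy of $[x]\mapsto[qx\cj{q}]$ says nothing about $H_2(\alpha)$; in fact $\eta_2(\alpha)$ is the image of the non-null point $H_2$ under the \emph{invertible} rotation $\alpha-h_1$, hence not null. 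The collapse of $\eta_2$ onto a null point happens only at the roots of $M_1$, and likewise $\varkappa_2$ collapses only at the roots of $(t-k_1)(t-\cj{k_1})=M_2$. So at each root of $C\cj{C}$ exactly \emph{one} of the two lines $H_1\vee H_2(\alpha_i)$, $K_1\vee K_2(\alpha_i)$ passes through $P_i$, never both, and $S(\alpha_i)\neq P_i$. The paper's own Example~2 shows this concretely: with $h_1=1+\qj$, $h_2=1+2\qk$, $k_1=1+\tfrac85\qj+\tfrac65\qk$, $k_2=1-\tfrac35\qj+\tfrac45\qk$ and the root $\alpha=0$ of $M_1=t(t-2)$, one gets $H_2(0)=[\qi-\qk]=P_0\in\NC$ but $K_2(0)=[4\qi+3\qj-4\qk]$ (norm $-9$), and $S(0)=K_2(0)\notin\NC$. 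Your claim $S(\alpha_i)=P_i\in\NC$ at all four roots is impossible even in principle: it would make the distinct conics $\IC$ and $\NC$ tangent at four points, contradicting B\'ezout.

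Since $S(\alpha_i)$ is not a null point, your identification of the null tangent at parameter $\alpha_i$ as ``the unique null line through $S(\alpha_i)$'' collapses, and with it the factorization-independence of the sides $n_i$ --- which was precisely what your argument was supposed to add. (The conclusion itself appears correct: in the example the line $H_1\vee H_2(0)$ is the tangent of $\NC$ at $P_0$, contains $S(0)$, and by Corollary~\ref{cor:3} is the null tangent of $\IC$ there; but this needs an argument, e.g.\ via the one joint that \emph{does} reach $P_i$ at $t=\alpha_i$.) The paper sidesteps the issue by computation: the proof of Theorem~\ref{th:2} determines the parameter values \eqref{eq:7} of the null tangents in a normal form and verifies directly which pair meets in $H_1$ and which in $K_1$, and Corollary~\ref{cor:3} identifies these parameters with the roots \eqref{eq:8} of $C\cj{C}$, which ties all complementary pairs to the same four tangents. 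A repaired version of your proof would have to replace the collapse claim by such an argument; as written, the second and third paragraphs do not establish the corollary. (A minor additional slip: with your convention that $M_{ij}$ produces $A_{ij}$ via the zero of $C-M_{ij}$, Theorem~\ref{th:2} gives $A_{ij}=n_k\cap n_l$ with $\{k,l\}$ the complementary pair, since $M_{ij}$ is the norm of the \emph{right} factor; this only permutes labels and is harmless.)
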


\begin{proof}
  By Definition~\ref{def:focal-point} and Theorem~\ref{th:2} the points $A_\ell$
  are the intersection points of the common tangents of the null conic $\NC$ and
  the conic $\IC$. The statement on the points $B_\ell$ follows by considering
  the inverse motion which is parameterized by $\cj{C}$.\hfill\qed
\end{proof}

\begin{rmk}
  As a consequence of Corollary~\ref{cor:1}, four-bar linkages with six real
  legs do not exist in traditional hyperbolic geometry because the fixed and
  moving vertices are necessarily in the exterior of $\NC$.
\end{rmk}

\begin{rmk}
  In the kinematics of traditional hyperbolic geometry it is known that there
  are up to six points whose trajectories have fourth order contact with their
  respective curvature circles. These points are called \emph{Burmester points}
  \cite{inalcik17}. If a generic quadratic motion polynomial admits six
  factorizations, it parameterizes a motion whose six real Burmester points are
  the moving revolute joints.
\end{rmk}

Each quaternion $b_\ell$ is the root of a linear polynomial $C-M_\ell$ where
$M_\ell$ is a monic quadratic factor of $C\cj{C}$. We may label them by elements
of the set $\{\{1,2\}, \{1,3\}, \{1,4\}, \{2,3\}, \{2,4\}, \{3,4\}\}$ in such a
way that two labels share an element if and only if the corresponding quadratic
polynomials have a linear factor in common. Call these polynomials and also the
corresponding vertices \emph{linked.} The labeling of quadratic polynomials
extends to the moving joints $B_\ell$ and, by the convention that vertices that
correspond in the reflection $\varrho(t)$ have disjoint labels, also to the
fixed vertices $A_\ell$. In this way it is guaranteed that the quadrance of
vertices $A_{rs}$ and $B_{rs}$ remains constant during the motion.

\begin{figure}
  \centering
  \includegraphics{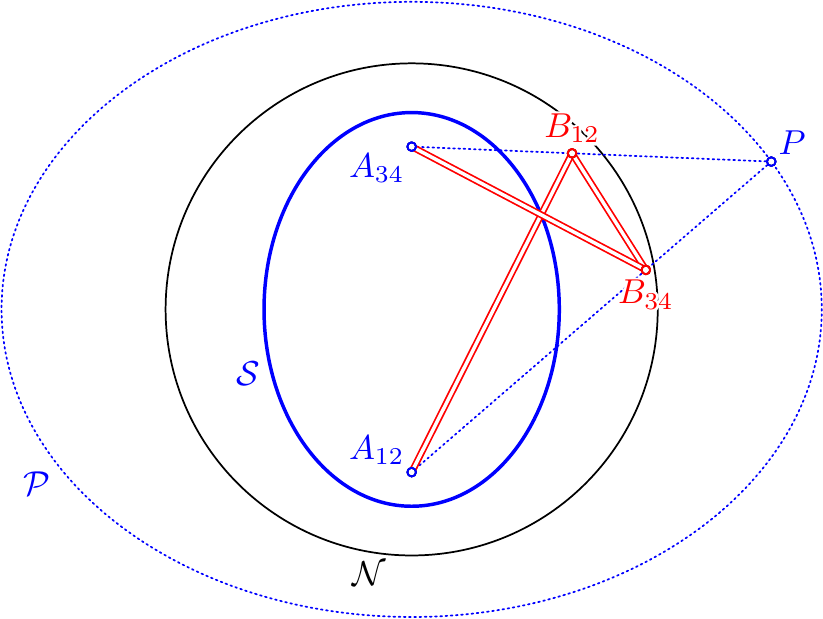}
  \caption{Geometry of fixed and moving revolute joints (two focal points)}
  \label{fig:hfb-2f}
\end{figure}

\begin{figure}
  \centering
  \includegraphics{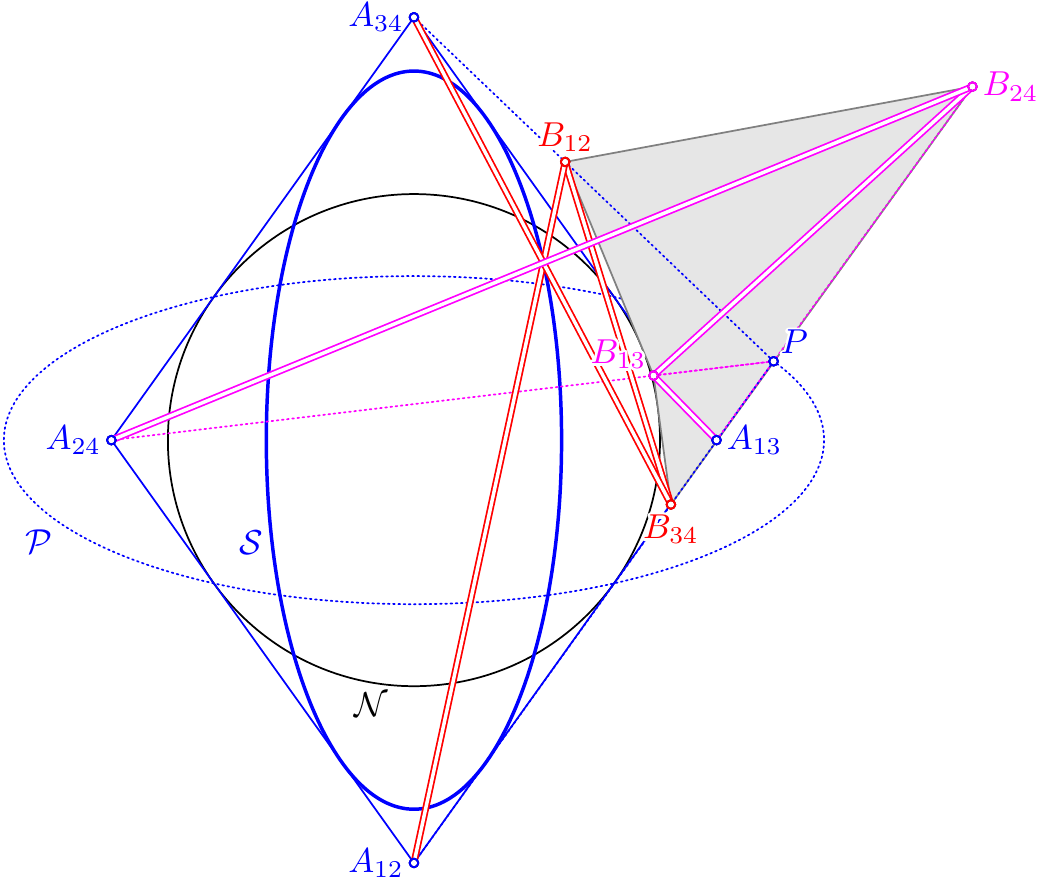}
  \caption{Geometry of fixed and moving revolute joints (six focal points)}
  \label{fig:hfb-6f}
\end{figure}

Figures~\ref{fig:hfb-2f} and \ref{fig:hfb-6f} illustrate the geometric relation
between the fixed revolute joints $A_{ij}$ and the moving revolute joints
$B_{ij}$. The first figure displays the case of only two focal points $A_{12}$
and $A_{34}$. We see the conic $\IC$ and the absolute polar conic $\mathcal{P}$
of its dual. There is a hyperbolic reflection with center $P \in \mathcal{P}$
that maps $A_{12}$ to $B_{12}$ and $A_{34}$ to $B_{34}$. As $P$ varies on
$\mathcal{P}$, the four-bar linkage moves.

The situation in Figure~\ref{fig:hfb-6f} is similar but the conic $\IC$ has six
focal points. Two of them are at (Euclidean) infinity and are not displayed. We
essentially repeated the construction of Figure~\ref{fig:hfb-2f} twice, once for
the focal points $A_{12}$, $A_{34}$ and once for the focal points
$A_{13}$, $A_{24}$. We would like to point the readers attention to the
following relations:
\begin{itemize}
\item All connecting lines of points $A_{ij}$ and $B_{ij}$ intersect in the same
  point of~$\IC$.
\item The tangent of $\IC$ in this point is the absolute polar of~$P$.
\item The connecting lines of points $A_{ij}$ and $A_{\ell r}$ is null if the
  sets $\{i,j\}$ and $\{\ell,r\}$ have a non-empty intersection.
\item The connecting lines of points $B_{ij}$ and $B_{\ell r}$ is null if the
  sets $\{i,j\}$ and $\{\ell,r\}$ have a non-empty intersection.
\end{itemize}

Corollary~\ref{cor:1} leaves a certain ambiguity as to the position of linked
fixed/moving joints: Either they are collinear or the form a triangle with null
sides. The next corollary answers this question.

\begin{cor}
  \label{cor:2}
  Linked vertices are collinear.
\end{cor}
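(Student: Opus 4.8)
The plan is to produce, for every real root of the norm polynomial, one null line that carries three of the fixed joints; collinearity then follows by inspection. Write $C\cj{C} = (t-\tau_1)(t-\tau_2)(t-\tau_3)(t-\tau_4)$ over $\R$, as in the six-factorization case, and fix a root $\tau_i$. Since $C\cj{C}$ vanishes at $\tau_i$, the quaternion $C(\tau_i)$ has zero norm, so the isometry $[x]\mapsto[C(\tau_i)\,x\,\cj{C(\tau_i)}]$ is singular; a short computation with a representative null quaternion shows that it has projective rank one and sends the whole plane to a single null point $P_i\in\NC$. Crucially, $P_i$ is manufactured from $C(\tau_i)$ alone and is therefore independent of any factorization. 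Let $L_i$ denote the tangent of $\NC$ at $P_i$, which is a null line.

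First I would connect $P_i$ with the factorizations that degenerate at $\tau_i$. For a factorization $C=(t-a_\ell)(t-b_\ell)$ the moving joint is carried by the left factor, $B_\ell(t)=[(t-a_\ell)(b_\ell-\cj{b_\ell})(t-\cj{a_\ell})]$, and this rotation degenerates exactly when $g\coloneqq\tau_i-a_\ell$ has zero norm, that is, when $\tau_i$ is a root of $(t-a_\ell)(t-\cj{a_\ell})$. In that case $g$ is a null rotation with centre $A_\ell=[a_\ell-\cj{a_\ell}]$, and writing $C(\tau_i)=g\,(\tau_i-b_\ell)$ and peeling off the outer factor identifies the image of $[x]\mapsto[C(\tau_i)x\cj{C(\tau_i)}]$ with the unique null point fixed by $[x]\mapsto[gx\cj{g}]$. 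By the previous paragraph this fixed point is $P_i$; in particular the degenerate leg passes through $P_i$, so $B_\ell(\tau_i)=P_i$.

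Next I would place the fixed joints on $L_i$. The null fixed point of a rotation lies on the absolute polar of its centre, hence $\IP{P_i}{a_\ell-\cj{a_\ell}}=0$, which says that $A_\ell$ lies on the polar of $P_i$, namely $L_i$. Exactly three factorizations degenerate at $\tau_i$, namely the three whose defining quadratic $M_\ell$ does not contain the factor $t-\tau_i$; their labels are the three two-element subsets of $\{1,2,3,4\}$ avoiding $i$, so the three are pairwise linked. Generically their centres $A_\ell$ are distinct, whence the three linked fixed joints lie on the single null line $L_i$ and are collinear. Running the identical argument for the inverse motion parameterized by $\cj{C}$, whose fixed joints are the moving joints $B_\ell$, yields the collinearity of the corresponding linked triple of moving joints, which is the assertion of Corollary~\ref{cor:2}.

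The hard part will be justifying the rank-one collapse together with the outer-factor step: one must show that $[x]\mapsto[C(\tau_i)x\cj{C(\tau_i)}]$ has a single null point as image and that this image coincides with the null point fixed by the outer factor $g$. Both are transparent for one explicit null quaternion, whence they hold for every null quaternion by conjugation to this normal form; and since the image of the composite is contained in the single-point image of $[x]\mapsto[gx\cj{g}]$, the two images must coincide. The only remaining genericity input is that the three centres $A_\ell$ are pairwise distinct, so that their common incidence with $L_i$ is a genuine collinearity rather than a coincidence. Once this is secured, the incidence $A_\ell\in L_i$ makes the collinearity of linked vertices immediate.
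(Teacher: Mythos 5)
Your proposal is correct in substance and takes a genuinely different route from the paper. The paper proves the corollary by brute-force computation: it reuses the coordinate normal form from the second case of the proof of Theorem~\ref{th:2} ($h_1 = h_{10}+h_{12}\qj$, $h_2 = h_{21}\qi+h_{22}\qj$), computes the three moving joints $B_{12}$, $B_{13}$, $B_{14}$ as explicit coordinate vectors, and checks collinearity by inspection. You instead argue synthetically with the null quaternions $C(\tau_i)$: the collapsed map $[x]\mapsto[C(\tau_i)x\cj{C(\tau_i)}]$ singles out one null point $P_i$ independent of any factorization, and the three mutually linked legs degenerating at $\tau_i$ have their fixed joints on the tangent $L_i$ of $\NC$ at $P_i$; the moving joints follow via $\cj{C}$, just as in the paper's proof of Corollary~\ref{cor:1}. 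Your bookkeeping is consistent with the paper's labelling: for $\tau_1$ you get $A_{23},A_{24},A_{34}$ on $L_1$, and your argument applied to $\cj{C}$ gives exactly the triple $B_{12},B_{13},B_{14}$ that the paper computes. What your route buys is insight the paper's computation conceals: it explains \emph{why} the sides of the complete quadrilateral are null lines (they are tangents of $\NC$ at the collapse points $P_i$), it attaches the four sides to the four real roots of the norm polynomial, and it delivers the content of Corollary~\ref{cor:3} essentially for free.

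Two steps need repair before this is complete. First, the incidence $\IP{P_i}{a_\ell-\cj{a_\ell}}=0$ is not an instance of the paper's statement that the null fixed points of a rotation lie on the absolute polar of its centre: that statement concerns invertible quaternions, whereas your $g=\tau_i-a_\ell$ is null, so $[x]\mapsto[gx\cj{g}]$ is not a rotation. The fact is still true, with a one-line proof: a null quaternion with scalar part $s$ satisfies $g\cj{g}=\cj{g}g=0$ and $g^2=2sg$, hence for vectorial $x$
\begin{equation*}
  2\IP{gx\cj{g}}{g-\cj{g}}
  = gx\cj{g}\,(\cj{g}-g) + (g-\cj{g})\,g\cj{x}\cj{g}
  = 2s\,g(x+\cj{x})\cj{g} = 0,
\end{equation*}
so the entire image of the collapsed map lies on the polar of its centre; some such verification must be part of the proof. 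Second, ``by conjugation to this normal form'' is inaccurate: conjugation preserves the scalar part, so nonzero null quaternions do not form a single conjugacy class (a vectorial null quaternion such as $\qi+\qj$ is not conjugate to $1+\qj$). For the rank-one collapse, use two-sided multiplication $g\mapsto ugv$ by invertible $u,v$ — under which all nonzero null quaternions are equivalent, and which transports the image of $[x]\mapsto[gx\cj{g}]$ by the isometry $[y]\mapsto[uy\cj{u}]$ — or check the two normal forms $1+\qj$ and $\qi+\qj$ separately. With these repairs, together with the genericity you already invoke ($C(\tau_i)\neq 0$, simple roots of $C\cj{C}$, distinct joints), the argument is sound.
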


\begin{proof}
  Using the setup of the second case in the proof of Theorem~\ref{th:2} we have
  $C\cj{C} = (t-t_1)(t-t_2)(t-t_3)(t-t_4)$ where
  \begin{equation}
    \label{eq:8}
    t_1 = h_{10} + h_{12},\
    t_2 = h_{10} - h_{12},\
    t_3 = (h_{22}^2-h_{21}^2)^{1/2},\
    t_4 = -(h_{22}^2-h_{21}^2)^{1/2}.
  \end{equation}
  For $(i,j) \in \{ (1,2), (1,3), (1,4) \}$ we set $M_{ij} \coloneqq (t - t_i)(t
  - t_j)$ and $h_{ij} \coloneqq -r_{ij,1}^{-1}r_{ij,0}$ where $r_{ij,1}t +
  r_{ij,0} = C - M_{ij}$ and $B_{ij} \coloneqq [h_{ij} - \cj{h_{ij}}]$. We then
  have
  \begin{equation*}
    \begin{aligned}
    B_{12} &= [-4h_{12}h_{21}(h_{12}+h_{22})\qi + 2h_{12}(h_{10}^2 -h_{21}^2 - (h_{12}+h_{22})^2)\qj -4h_{10}h_{12}h_{21}\qk ],\\
    B_{13} &= [h_{21}(w+u+h_{22})\qi +((w+h_{22})u +h_{21}^2)\qj +h_{21}(w +h_{10}-h_{12})\qk],\\
    B_{14} &= [h_{21}(w -u - h_{22})\qi +((w-h_{22})u -h_{21}^2)\qj -h_{21}(w -h_{10}+h_{12})\qk]
    \end{aligned}
  \end{equation*}
  where $w = (h_{22}^2-h_{21}^2)^{1/2}$ and $u = h_{12}-h_{10}+h_{22}$. It can
  readily be verified that these points are collinear. \hfill\qed
\end{proof}

Finally, the proof of Corollary~\ref{cor:2} also demonstrates a geometric
property of the roots of the norm polynomial.

\begin{cor}
  \label{cor:3}
  If $t_0$ is a zero of $C\cj{C}$ then the tangent of $\mathcal{S}$ at $S(t_0)$
  is null.
\end{cor}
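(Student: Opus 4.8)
The plan is to leverage the explicit computations already carried out in the proof of Corollary~\ref{cor:2}, which essentially contain all the information needed. Recall that we want to show: if $t_0$ is a zero of $C\cj{C}$, then the tangent of $\IC$ at $S(t_0)$ is null, i.e.\ it is tangent to the absolute conic $\NC$. I would work in the coordinate normalization of the second case in the proof of Theorem~\ref{th:2}, where $h_1 = h_{10} + h_{12}\qj$, $h_2 = h_{21}\qi + h_{22}\qj$, and the four real roots of $C\cj{C}$ are given explicitly by \eqref{eq:8} as $t_1 = h_{10}+h_{12}$, $t_2 = h_{10}-h_{12}$, $t_3 = w$, $t_4 = -w$ with $w = (h_{22}^2-h_{21}^2)^{1/2}$.

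The first step is to recall that the tangent line of the conic $\IC$ at the point $S(t) = [\sigma(t)]$ is spanned, in the point-line duality of $P^2$, by the cross product $\sigma(t) \times \sigma'(t)$; indeed this is precisely the vector $G \times \frac{\mathrm d}{\mathrm d t}G$ that already appeared in the proof of Theorem~\ref{th:2} (the scalar factor $F$ and its derivative contribute nothing to the projective tangent direction). A line $[u]$ is null exactly when it is tangent to $\NC$, which by the defining quadratic form means $\IP{u}{u} = 0$, equivalently $u\cj{u} = 0$. Thus the condition I must verify is that $(G \times \frac{\mathrm d}{\mathrm d t}G)\cj{(G \times \frac{\mathrm d}{\mathrm d t}G)}$ vanishes at $t = t_0$.

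But this last quantity is exactly the quartic polynomial whose root set was computed in the proof of Theorem~\ref{th:2}: it was shown there to equal $\{\pm(h_{22}^2-h_{21}^2)^{1/2},\, h_{10}\pm h_{12}\}$, see \eqref{eq:7}. Comparing with \eqref{eq:8}, these four values are precisely $t_3, t_4, t_1, t_2$ — that is, exactly the four zeros of $C\cj{C}$. Hence the parameter values at which the tangent of $\IC$ is null coincide with the zeros of the norm polynomial, which is the assertion. I would phrase the proof as: the tangent at $S(t_0)$ is null if and only if $\IP{G\times \frac{\mathrm d}{\mathrm d t}G}{G \times \frac{\mathrm d}{\mathrm d t}G}$ vanishes at $t_0$, and by the computation in the proof of Theorem~\ref{th:2} the zeros of this expression are exactly the four roots \eqref{eq:8} of $C\cj{C}$.

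The main obstacle is really one of bookkeeping rather than genuine difficulty: I must make sure that the scalar prefactor $F$ appearing in $\sigma = FG$ does not introduce spurious roots or cancel genuine ones, and that passing from $\sigma \times \sigma'$ to $G \times \frac{\mathrm d}{\mathrm d t}G$ is legitimate up to an overall scalar that is nonzero at the relevant parameter values (in the generic setting we assume). Once this is confirmed, the identification of the null-tangent parameters with the roots of $C\cj{C}$ is immediate from the already-established formula \eqref{eq:7}, so no new substantial computation is required; the corollary is essentially a reinterpretation of a computation performed earlier.
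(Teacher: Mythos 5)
Your proposal is correct and follows exactly the paper's own argument: the parameter values of the null tangents computed in the proof of Theorem~\ref{th:2} (Equation~\eqref{eq:7}) are compared with the roots of $C\cj{C}$ from the proof of Corollary~\ref{cor:2} (Equation~\eqref{eq:8}), and they coincide. The only difference is that you spell out the bookkeeping (why the tangent is represented by $G \times \frac{\mathrm{d}}{\mathrm{d}t}G$ and why the scalar factor $F$ is irrelevant), which the paper leaves implicit; this is a harmless elaboration, not a different route.
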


\begin{proof}
  The parameter values of null tangents are given in \eqref{eq:7}, the zeros of
  $C\cj{C}$ are given in \eqref{eq:8}. Obviously, these values are identical.
  \hfill\qed
\end{proof}

\section{Comparison with the Euclidean Case}

We already mentioned that the coupler motion of four-bar linkages with equal
opposite sides in Euclidean geometry has a parallelogram and an
anti-parallelogram mode. Using dual quaternions, a factorization theory similar
to the elliptic or hyperbolic geometry is possible
\cite{gallet17,hegedus13,li15b,li18}. However, the behavior of parallelograms
and anti-parallelograms in this context is quite different.
\begin{itemize}
\item Generic quadratic motion polynomials describe anti-parallelogram linkages
  and admit two factorizations.
\item Parallelogram linkages are described by rather special quadratic motion
  polynomials that admit \emph{infinitely} many factorizations. Each
  factorizations corresponds to one of infinitely many legs of the corresponding
  parallelogram linkage.
\end{itemize}
This is illustrated in Figure~\ref{fig:efb}. On the left hand-side, an
anti-parallelogram linkage is displayed. Quite similar to the hyperbolic case,
the moving revolute joints $B_{12}$, $B_{34}$ are obtained from the fixed
revolute joints $A_{12}$, $A_{34}$ by a reflection in the tangent of the conic
$\IC$. In fact, we may view this situation as limiting case of the hyperbolic
construction illustrated in Figure~\ref{fig:hfb-2f}. The homology center $P$ of
the hyperbolic case becomes a point at infinity in the Euclidean limit. Also
note that the connecting lines of $A_{12}$, $B_{34}$ and $A_{34}$, $B_{12}$ in
the anti-parallelogram are parallel while the parallelograms exhibit a well
known central similarity.

\begin{figure}
  \centering
  \includegraphics{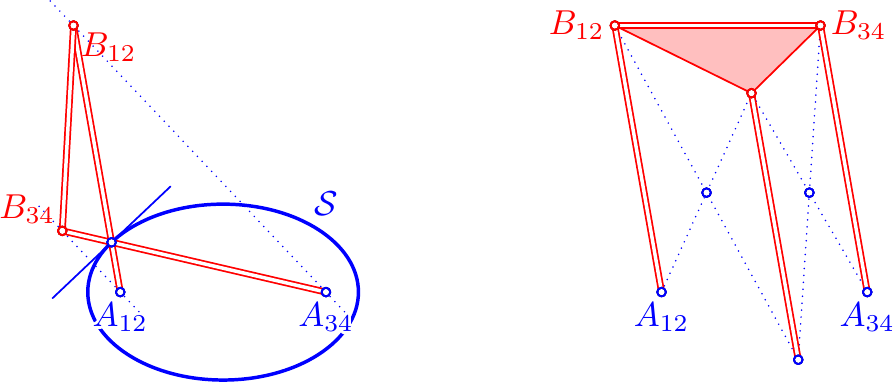}
  \caption{Euclidean anti-parallelogram and parallelogram mechanism}
  \label{fig:efb}
\end{figure}

The parallelogram linkage displayed in the right-hand side of
Figure~\ref{fig:efb} evades generic factorization theory of dual quaternions.
The infinitely many factorizations correspond to infinitely many legs that can
be added without disturbing the motion. The parallelogram linkage may be viewed
as a degenerate limit of the hyperbolic construction as explained in the next
paragraph.

In hyperbolic (and elliptic) geometry, it makes no sense to speak of
parallelograms: Given three generic points $A_{12}$, $A_{34}$, $B_{34}$ in the
hyperbolic or elliptic plane, we want to find a point $B_{12}$ such that
$\Quadrance{A_{12}}{A_{34}} = \Quadrance{B_{12}}{B_{34}}$ and
$\Quadrance{A_{12}}{B_{12}} = \Quadrance{A_{34}}{B_{34}}$. This problem has two
solutions $B^1_{12}$, $B^2_{12}$; their construction is shown in
Figure~\ref{fig:quad-construction}. There exist two mid-points $C_1$, $C_2$ of
$A_{12}$ and $B_{34}$ (see \cite{wildberger10}) and $B^\ell_{12}$ is the
reflection of $A_{34}$ in $C_\ell$ for $\ell \in \{1,2\}$. There is no way to
distinguish algebraically between $B^1_{12}$ and $B^2_{34}$ and in general
nothing special can be said about the corresponding motion modes and their
factorizations. In a suitable passage to the limit towards Euclidean geometry,
one midpoint tends towards infinity and leads to the Euclidean
anti-parallelogram case.

\begin{figure}
  \centering
  \includegraphics{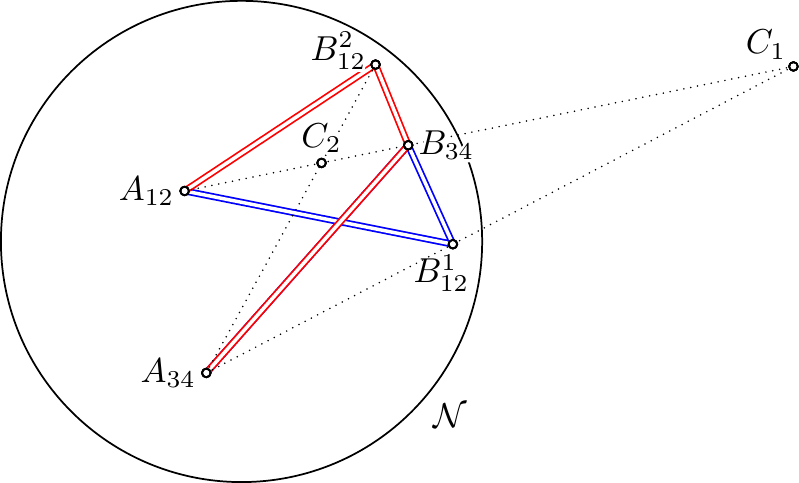}
  \caption{Construction of quadrilaterals with equal opposite quadrances}
  \label{fig:quad-construction}
\end{figure}

We want to conclude by emphasizing that we were only concerned with generic
cases in this paper. In particular, we always assumed that a finite number of
factorizations exist. There are monic quadratic polynomials $C \in \S[t]$ that
do not satisfy this assumption \cite{li18}. An investigation of their geometry
is a topic of future research.

\section*{Acknowledgment}

This work was supported by the Austrian Science Fund (FWF): P~31061 (The Algebra
of Motions in 3-Space).
 
\bibliographystyle{amsplain}
\providecommand{\bysame}{\leavevmode\hbox to3em{\hrulefill}\thinspace}
\providecommand{\MR}{\relax\ifhmode\unskip\space\fi MR }
\providecommand{\MRhref}[2]{  \href{http://www.ams.org/mathscinet-getitem?mr=#1}{#2}
}
\providecommand{\href}[2]{#2}

\end{document}